\newtheorem{theorem}{Theorem}
\newtheorem{lemma}[theorem]{Lemma}
\let\oldlemma\lemma
\renewcommand{\lemma}{\oldlemma\normalfont}
\let\oldcorollary\corollary
\renewcommand{\corollary}{\oldcorollary\normalfont}
\let\oldprop\prop
\renewcommand{\prop}{\oldprop\normalfont}
\let\oldrmk\rmk
\renewcommand{\rmk}{\oldrmk\normalfont}
\let\oldexample\example
\renewcommand{\example}{\oldexample\normalfont}
\theoremstyle{definition}
\theoremstyle{remark}
\renewcommand{\rho}{\varrho}
\begin{document}

\title{Large gaps in the image of the Euler's function}
\author{Paolo Leonetti}
\address{Universit\`a Bocconi di Milano, via Sarfatti $25$, $20100$ Milan, Italy.}
\email{leonetti.paolo@gmail.com}

\subjclass[2010]{Primary 11A25, 11A41; Secondary 11N05.}

\keywords{Euler's function, distribution totients, large gap, Abel summation.} 
\begin{abstract}  
The aim of this note is to provide an upper bound of the number of positive integers $\le x$ which can be written as $\varphi(n)$ for some positive integer $n$, where $\varphi$ stands for the Euler's function. The order of magnitude of this estimate, which is roughly $x/\sqrt[4]{\ln x}$, implies that the set of Euler's values contains arbitrarily large gaps.
\end{abstract}
\maketitle
\thispagestyle{empty}

\section{Introduction}\label{sec:introduction}

Let $\varphi$ be the Euler's function, so that $\varphi(n)$ stands for the number of integers smaller or equal than $n$ and coprime with $n$, and denote with $V$ the set of Euler's values, that is, the set of positive integers which can be written as $\varphi(n)$ for some positive integer $n$. Then, we show that the set $V$ contains arbitrarily large gaps, viz.  
\begin{displaymath}
\textstyle \limsup_{n \in \mathbf{N}}\,v_{n+1}-v_n = \infty,
\end{displaymath}
where $(v_1,v_2,\ldots)$ represents the sequence of elements of $V$ in increasing order. Let also $V(x)$ be the number of elements of $V$ smaller than $x$. The existence of arbitrarily large gaps in the set $V$ follows by: 

\begin{theorem}\label{th:main1}
There exists a positive constant $c$ such that for all $x \ge 2$ we have
\begin{displaymath}
V(x) \le c\, \frac{x}{(\ln x)^{.2587966}}.
\end{displaymath}
\end{theorem}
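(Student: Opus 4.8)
The plan is to exploit the fact that totients are highly divisible by powers of $2$, and to play this divisibility off against the scarcity of integers with abnormally few prime factors. Write $\omega^\ast(n)$ for the number of distinct odd primes dividing $n$. Since $\varphi(n)=\prod_{p\mid n}p^{\nu_p(n)-1}(p-1)$ and each odd prime $p$ contributes the even factor $p-1$, one has the elementary lower bound $\nu_2(\varphi(n))\ge \omega^\ast(n)$, where $\nu_2$ denotes the $2$-adic valuation (and $\nu_p$ the $p$-adic one). I would fix a threshold $J=\lfloor \alpha\ln\ln x\rfloor$ for a constant $\alpha\in(0,1)$ to be optimized later, and split
\[
V(x)\le \#\{v\in V: v\le x,\ \nu_2(v)\ge J\}+\#\{v\in V: v\le x,\ \nu_2(v)<J\}.
\]

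The first term is controlled purely by the size of the value: every such $v$ is a multiple of $2^J$, so this count is at most $x/2^{J}=x(\ln x)^{-\alpha\ln 2+o(1)}$. For the second term I would pass to preimages. If $v=\varphi(n)\le x$ and $\nu_2(v)<J$, then $\omega^\ast(n)\le \nu_2(\varphi(n))<J$, so $n$ has fewer than $J+1$ distinct prime factors; moreover $n/\varphi(n)\ll \ln\ln n$ by Mertens, whence $n\le y:=Cx\ln\ln x$ for a suitable constant $C$. Since each value of $V$ has at least one preimage, the second term is at most $\#\{n\le y:\ \omega(n)<J+1\}$.

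To estimate this count with a genuine power saving I would use Rankin's trick: for any $z\in(0,1)$,
\[
\#\{n\le y:\ \omega(n)<J+1\}\le z^{-J}\sum_{n\le y}z^{\omega(n)}\ll_z z^{-J}\,y\,(\ln y)^{z-1},
\]
the last step being the standard Selberg--Delange (or elementary Hardy--Ramanujan) estimate for $\sum_{n\le y}z^{\omega(n)}$. With $J\sim\alpha\ln\ln y$ the factor $z^{-J}$ equals $(\ln y)^{-\alpha\ln z+o(1)}$, so the exponent of $\ln y$ is $(\alpha-1)-\alpha\ln\alpha$ at the optimal choice $z=\alpha$, namely $-h(\alpha)$ with $h(\alpha):=1-\alpha+\alpha\ln\alpha$. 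Hence the second term is $\ll y(\ln y)^{-h(\alpha)+o(1)}\asymp x\ln\ln x\,(\ln x)^{-h(\alpha)+o(1)}$. The same exponent emerges if one instead inserts the Sathe--Selberg asymptotics for $\#\{n\le y:\omega(n)=k\}$ and sums over $k<J$ by Abel summation, the sum being dominated by its top term when $J<\ln\ln y$.

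It remains to balance the two contributions, both of the shape $x(\ln x)^{-\beta}$ up to a factor $\ln\ln x$, with exponents $\alpha\ln 2$ and $h(\alpha)$ respectively. I would therefore choose $\alpha$ to solve $h(\alpha)=\alpha\ln 2$, i.e. $1-\alpha+\alpha\ln(\alpha/2)=0$, giving $\alpha=0.3733\ldots$ and common exponent $\alpha\ln 2=0.2588\ldots$. Replacing this by the slightly smaller number $.2587966$ in the statement absorbs the stray factor $\ln\ln x$ and the implied constants, since $(\ln\ln x)\,(\ln x)^{-0.2588}\le(\ln x)^{-.2587966}$ for all large $x$, while the finitely many small values of $x$ are swallowed by the constant $c$. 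The main obstacle I anticipate is precisely the second term: obtaining a power saving for integers with too few prime factors forces the large-deviation (Rankin or Sathe--Selberg) input rather than any trivial estimate, and one must also check that enlarging the range of preimages to $n\le Cx\ln\ln x$ inflates the bound by no more than the absorbable factor $\ln\ln x$.
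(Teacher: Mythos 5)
Your argument is correct and arrives at the paper's exponent through the identical balancing equation $1-c+c\ln c=c\ln 2$, but the large-deviation input is genuinely different. The paper performs the same basic split (via $2^{\omega(n)-1}\mid\varphi(n)$: integers with at most $k$ versus more than $k$ distinct prime factors, $k\approx c\ln\ln x$), but it controls the first piece by proving from scratch, by induction and Abel summation, the Hardy--Ramanujan-type bound $\varrho_k(x)=\mathcal{O}\bigl(\tfrac{x}{\ln x}\cdot\tfrac{(\ln\ln x)^{k-1}}{(k-1)!}\bigr)$ and then extracting the exponent $1-c+c\ln c$ with Stirling's formula. You reach the same exponent in one stroke with Rankin's trick applied to $\sum_{n\le y}z^{\omega(n)}\ll_z y(\ln y)^{z-1}$; optimizing at $z=\alpha$ is precisely the Chernoff bound that reproduces the Stirling computation. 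Your route is shorter but outsources the mean-value estimate (Halberstam--Richert or Selberg--Delange), whereas the paper is self-contained down to Mertens' theorems. A point in your favour: by working with $\nu_2(v)$ on the value side and invoking $n/\varphi(n)\ll\ln\ln n$ to confine preimages to $n\le Cx\ln\ln x$, you explicitly handle the fact that $\varphi(n)\le x$ does not force $n\le x$ --- an issue the paper's step $|\varphi(N_k)\cap[1,x]|\le|N_k\cap[1,x]|$ passes over silently, and which your observation is exactly what is needed to repair. Finally, your absorption of the stray $\ln\ln x$ factors into the slack between the true balanced exponent $c^\star\ln 2$ and the truncated value $.2587966$ is legitimate for large $x$ (with small $x$ swallowed by the constant); the paper implicitly does the same with the $\mathcal{O}(\ln\ln\ln x)$ error term in its logarithmic computation.
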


Indeed, it implies that $V(x)=o(x)$, hence $v_{n+1}-v_n$ cannot be bounded. It is worth noticing that Theorem \ref{th:main1} is just a refinement of the estimate given by Pillai \cite{Pillai}, where he obtained an upper bound with order of magnitude $x/(\ln x)^{\ln 2/e}$. 

As far as the Euler's function is bijective on primes, it is immediate that $V(x)$ is greater than the number of primes smaller than or equal to $x$. Then, according to the Prime Number Theorem (or to the elementary bounds provided by Chebyshev, see e.g. \cite{Apostol}), $V(x)$ has to be a order of magnitude greater than or equal to $x/\ln x$.  A natural question arises: what is the \textquotedblleft correct\textquotedblright\hspace{.3mm} exponent $t$ such that the order of magnitude of $V(x)$ is exactly $x/(\ln x)^t$? In its seminal paper \cite{Erdosphi}, Erd\"{o}s proved that the answer has to be $1$, meaning that the $\varphi$-values in $[1,x]$ are so exceptional that they are just $x/(\ln x)^{1+o(1)}$. Further improvements can be found in \cite{Maier} and \cite{Ford}. At the moment, we still do not know if a natural asymptotic formula exists.

\section{Preparations}\label{sec:preliminaries}

Here and later, $\mathbf{N}$, $\mathbf{P}$, and $\mathbf{R}$ stand for the set of positive integers, primes, and reals, respectively. At this point, we are going to prove some preliminary lemmas: all of them are standard results in number theory. Therefore, the reader who just wants to read the proof of Theorem \ref{th:main1} can skip directly to Section \ref{sec:proof}. 

\begin{lemma}\label{pie}
Let $\mu$ represent the M\"{o}bius function, which is the arithmetical function defined by $\mu(1)=1$, $\mu(n)=(-1)^k$ if $n\ge 2$ is squarefree, where $k$ is the number of distinct prime factors of $n$, and $\mu(n)=0$ otherwise. Let also $m$ be a positive integer and $N$ a non-empty subset of $\mathbf{N}$. Then, the number of integers which belong to $N$, smaller than or equal to $x$, and coprime with $m$ is equal to
\begin{displaymath}
\sum_{d \mid m}{\mu(d) \left|N \cap d\mathbf{N} \cap [1,x]\right|}.
\end{displaymath}
\end{lemma}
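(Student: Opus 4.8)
The plan is to recognize this statement as a standard inclusion--exclusion sieve, encoded through the classical property of the M\"{o}bius function. The one fact I would isolate at the outset is the identity
\begin{displaymath}
\sum_{d \mid k}\mu(d) = \begin{cases} 1 & \text{if } k=1,\\ 0 & \text{if } k \ge 2,\end{cases}
\end{displaymath}
which is elementary (it follows from the multiplicativity of $n \mapsto \sum_{d\mid n}\mu(d)$ together with $\sum_{d\mid p^a}\mu(d)=1+\mu(p)=0$ for every prime power $p^a$). This identity converts a divisibility condition into an arithmetic sum, and it is the engine that drives the whole argument.

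Next I would write the quantity to be counted, namely $\#\{n \in N : n \le x,\ \gcd(n,m)=1\}$, and observe that the coprimality condition $\gcd(n,m)=1$ is captured exactly by the indicator $\sum_{d \mid \gcd(n,m)}\mu(d)$, via the displayed identity applied to $k=\gcd(n,m)$. Summing this indicator over all admissible $n$ gives
\begin{displaymath}
\#\{n \in N : n \le x,\ \gcd(n,m)=1\} = \sum_{\substack{n \in N \\ n \le x}}\ \sum_{d \mid \gcd(n,m)}\mu(d).
\end{displaymath}
The crucial rewriting is that $d \mid \gcd(n,m)$ holds precisely when $d \mid m$ and $d \mid n$, so the inner summation ranges over divisors $d$ of $m$ subject to the extra constraint $d \mid n$.

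Finally I would interchange the two (finite) summations, grouping by the divisor $d$ of $m$ rather than by $n$. For each fixed $d \mid m$, the inner count becomes the number of $n \in N$ with $n \le x$ and $d \mid n$, which is exactly $|N \cap d\mathbf{N} \cap [1,x]|$; carrying along the weight $\mu(d)$ yields
\begin{displaymath}
\sum_{d \mid m}\mu(d)\,\bigl|N \cap d\mathbf{N} \cap [1,x]\bigr|,
\end{displaymath}
as claimed. I do not anticipate a genuine obstacle here: both sums are finite (the outer one is over $n \le x$ in a subset of $\mathbf{N}$, the inner over the finitely many divisors of $m$), so Fubini-type concerns are vacuous and the interchange is unconditionally valid. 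The only point requiring a word of care is the equivalence $d \mid \gcd(n,m) \iff (d \mid m \text{ and } d \mid n)$, which justifies replacing the condition on $\gcd(n,m)$ by a condition on $m$ and $n$ separately; everything else is bookkeeping.
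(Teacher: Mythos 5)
Your proposal is correct and follows essentially the same route as the paper: both use the identity $\sum_{d\mid k}\mu(d)=[k=1]$ to encode the coprimality condition, then interchange the resulting finite double sum over $n$ and over $d\mid m$. The only cosmetic difference is that the paper phrases the interchange as a double-counting argument rather than a formal swap of summation order.
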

\begin{proof}
It is easy to see that the arithmetical function $n\mapsto \sum_{d\mid n}\mu(d)$ is multiplicative, so that it is equal to $1$ if $n=1$ and $0$ otherwise (indeed, it is enough to verify it for the prime powers); then the required sum is equal to
\begin{displaymath}
\sum_{\substack{n \in N\cap [1,x], \\ \text{gcd}(n,m)=1}}{1}=\sum_{n \in N\cap [1,x]}\left(\sum_{d\mid \text{gcd}(n,m)}{\mu(d)}\right).
\end{displaymath}
Let us evaluate this value by double counting: fix a positive integer $d$ which divides $\text{gcd}(n,m)$. In particular the set of possible values of $d$ will be a subset of divisors of $m$, so that the element $\mu(d)$ will be counted exactly $|\{n \in N \cap [1,x]\colon d\mid n\}|$ times. It follows that the required number can be rewritten as
\begin{displaymath}
\sum_{d\mid m}\left(\mu(d)\sum_{n \in N\cap [1,x], d\mid n}{1}\right),
\end{displaymath} 
which is equivalent to the claim. 
\end{proof}

This first result allows us to deduce the rather famous formula for the computation of $\varphi(n)$. Indeed, setting $x=n$ and $N$ equal to set of positive integers $\le n$, we obtain that
\begin{displaymath}
\varphi(n)=\sum_{d\mid n}{\mu(d)|N\cap d\mathbf{N}\cap [1,n]|}=n\sum_{d\mid n}{\frac{\mu(d)}{d}}=n\prod_{p\mid n}{\left(1-\frac{1}{p}\right)}.
\end{displaymath}


\begin{lemma}\label{lemma:abel}
\textbf{(Abel summation).} Let $(\lambda_n)_{n\in \mathbf{N}}$ be a strictly increasing and unbounded sequence of positive reals, $(a_n)_{n\in \mathbf{N}}$ a sequence of complex numbers, and $f$ a differentiable complex-valued function defined on positive reals. Define also $\alpha(x)=\sum_{\lambda_n \le x}{a_n}$ for all reals $x$ greater than $\lambda_1$. Then
\begin{displaymath}
\sum_{\lambda_n\le x}{a_nf(\lambda_n)}=\alpha(x)f(x)-\int_{\lambda_1}^{x}{\alpha(t)f^\prime(t)\,\text{d}t}.
\end{displaymath}
\end{lemma}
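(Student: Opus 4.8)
The plan is to exploit the fact that the partial-sum function $\alpha$ is a step function and thereby reduce the integral on the right-hand side to a telescoping sum. First I would let $N$ denote the largest index with $\lambda_N \le x$; this is well defined because $(\lambda_n)$ is strictly increasing and unbounded, so only finitely many terms fall in $[\lambda_1, x]$, and at least one does since $x > \lambda_1$. Writing $\alpha_n := \alpha(\lambda_n) = \sum_{k \le n} a_k$ and setting $\alpha_0 := 0$, the left-hand side is simply $\sum_{n=1}^N a_n f(\lambda_n)$, and the key observation is that $\alpha(t) = \alpha_n$ for every $t \in [\lambda_n, \lambda_{n+1})$, while $\alpha(t) = \alpha_N$ for $t \in [\lambda_N, x]$.

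Next I would split the integral according to these intervals of constancy, writing
\begin{displaymath}
\int_{\lambda_1}^x \alpha(t) f^\prime(t)\,\text{d}t = \sum_{n=1}^{N-1} \alpha_n \int_{\lambda_n}^{\lambda_{n+1}} f^\prime(t)\,\text{d}t + \alpha_N \int_{\lambda_N}^x f^\prime(t)\,\text{d}t.
\end{displaymath}
By the fundamental theorem of calculus each inner integral collapses to a difference of values of $f$, namely $f(\lambda_{n+1}) - f(\lambda_n)$ in the generic term and $f(x) - f(\lambda_N)$ in the last one.

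Then comes the bookkeeping step: after substituting these differences, I would reindex the sum $\sum_{n=1}^{N-1}\alpha_n f(\lambda_{n+1})$ by shifting $n \mapsto n-1$ and regroup the resulting expression by the common argument $f(\lambda_k)$. Each interior coefficient becomes $\alpha_{k-1} - \alpha_k = -a_k$, the boundary contributions at $k=1$ and $k=N$ combine using $\alpha_N - \alpha_{N-1} = a_N$ together with the identity $\alpha_N = \alpha(x)$, and the whole expression telescopes to $\alpha(x) f(x) - \sum_{n=1}^N a_n f(\lambda_n)$. Rearranging this equality yields exactly the claimed formula.

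The only genuinely delicate points are the careful matching of indices in the telescoping — it is easy to mishandle the first or last term — and the implicit requirement that $f^\prime$ be integrable on $[\lambda_1, x]$, so that the fundamental theorem of calculus may be invoked on each subinterval; I would take the latter for granted under the stated hypotheses. Since no step requires more than elementary manipulation, I expect the reindexing to be the main source of care rather than the main difficulty.
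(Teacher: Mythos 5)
Your proof is correct and follows essentially the same route as the paper's: both rest on the observation that $\alpha$ is constant on each interval $[\lambda_n,\lambda_{n+1})$, apply the fundamental theorem of calculus on each such piece, and telescope; you merely run the computation from the integral side rather than performing summation by parts on the sum first. If anything, your version is slightly more complete, since you treat the final interval $[\lambda_N,x]$ explicitly, a step the paper's proof leaves implicit when it passes from $\alpha(\lambda_m)f(\lambda_m)-\int_{\lambda_1}^{\lambda_m}\alpha(t)f^\prime(t)\,\text{d}t$ to the stated formula at $x$.
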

\begin{proof}
Define for convenience $\lambda_0=\alpha(0)=0$. Then for all positive integers $m$
\begin{displaymath}
\begin{split}
\sum_{1\le n\le m}{a_nf(\lambda_n)} & =\sum_{1\le n\le m}{\left(\alpha(\lambda_n)-\alpha(\lambda_{n-1})\right)f(\lambda_n)} \\
                                          & =\alpha(\lambda_m)f(\lambda_m)-\sum_{1\le n\le m-1}{\alpha(\lambda_n)\left(f(\lambda_{n+1})-f(\lambda_n)\right)}.
\end{split}
\end{displaymath}
At this point, let $x$ be a real greater than $\lambda_1$, and define $m$ the greatest integer such that $\lambda_m \le x$, which exists by assumption. Considering that $\sum_{\lambda_n\le x}{a_nf(\lambda_n)}$ is equal to $\sum_{1\le n\le m}{a_nf(\lambda_n)}$ and that the function $\alpha$ is constant in $[\lambda_n,\lambda_{n+1})$, then
\begin{displaymath}
\begin{split}
\sum_{\lambda_n\le x}{a_nf(\lambda_n)} & = \alpha(\lambda_m)f(\lambda_m)-\sum_{1\le n\le m-1}{\alpha(\lambda_n)\left(f(\lambda_{n+1})-f(\lambda_n)\right)} \\
                                          & = \alpha(\lambda_m)f(\lambda_m)-\sum_{1\le n\le m-1}{\alpha(\lambda_n)\int_{\lambda_n}^{\lambda_{n+1}}{f^\prime(t)\,\text{d}t}}    \\
																					& = \alpha(\lambda_m)f(\lambda_m)-\sum_{1\le n\le m-1}{\int_{\lambda_n}^{\lambda_{n+1}}{\alpha(t)f^\prime(t)\,\text{d}t}}.   
\end{split}
\end{displaymath}																					
Hence it turns out that this sum is equal to
\begin{displaymath}
\alpha(\lambda_m)f(\lambda_m)-\int_{\lambda_1}^{\lambda_{m}}{\alpha(t)f^\prime(t)\, \text{d}t},   
\end{displaymath}
which is equivalent to the claim.
\end{proof}

It is worth noticing that, in the setting of Stieltjes integral, the summation takes the innocuous form of partial integration, that is why this result is commonly known as \textquotedblleft partial summation.\textquotedblright\hspace{.3mm} Indeed, under the assumption of Lemma \ref{lemma:abel}, the required sum can be directly rewritten as
\begin{displaymath}
\int_{\lambda_1}^x{f(t)\,\text{d}\alpha(t)}=\left[ \text{ }\alpha(t)f(t)^{\text{ }}_{\text{ }} \right]_{\lambda_1}^x - \int_{\lambda_1}^x{\alpha(t)f^\prime(t)\,\text{d}t}.
\end{displaymath}

For convenience, from here later, we are going to use the Bachmann-Landau notations: given real-valued functions $f, g \colon \mathbf{R}\to \mathbf{R}$, the big-Oh $f(x)=\mathcal{O}(g(x))$ stands for the existence of a constant $c$ such that (the absolute value of) their ratio is upper bounded by $c$ whenever $x$ is sufficiently large, that is $\limsup_{x\to \infty}{\left|f(x)/g(x)\right|} \le c$. Moreover, the little-oh $f(x)=o(g(x))$ means that $|f(x)|$ is definitively arbitrarily smaller than $|g(x)|$, i.e. $\lim_{x\to \infty}{\left|f(x)/g(x)\right|}=0$.


\begin{lemma}\label{lemma:omegabound} 
Let $k$ be a positive integer and $c$ a positive constant. Then the number of integers $n$ smaller than $x$ with exactly $k$ distinct prime factors is smaller than $cx$ whenever $x$ is sufficiently large.
\end{lemma}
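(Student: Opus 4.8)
The plan is to establish the stronger quantitative estimate
\begin{displaymath}
N_k(x) := \#\{n < x : \omega(n) = k\} = \mathcal{O}\!\left(\frac{x\,(\ln\ln x)^{k-1}}{\ln x}\right),
\end{displaymath}
where $\omega(n)$ denotes the number of distinct prime factors of $n$. Since $(\ln\ln x)^{k-1} = o(\ln x)$, this bound gives $N_k(x) = o(x)$, whence $N_k(x) < cx$ for every fixed $c > 0$ and all sufficiently large $x$, which is exactly the claim. I would prove the displayed estimate by induction on $k$.

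For the base case $k = 1$, the integers with a single distinct prime factor are precisely the prime powers $p^a$; their number below $x$ is $\pi(x) + \sum_{a \ge 2}\pi(x^{1/a}) = \pi(x) + \mathcal{O}(\sqrt{x}\,\ln x)$, and Chebyshev's bound $\pi(x) = \mathcal{O}(x/\ln x)$ (recalled in the Introduction) settles this case. For the inductive step, I would write each $n < x$ with $\omega(n) = k$ as $n = p^a m$ with $p$ one of its prime factors, $a \ge 1$, and $\omega(m) = k - 1$; since this assignment is injective and $m < x/p^a$, it yields
\begin{displaymath}
N_k(x) \le \sum_{p \in \mathbf{P}} \sum_{a \ge 1} N_{k-1}(x/p^a).
\end{displaymath}
Inserting the induction hypothesis, the contribution of the exponents $a \ge 2$ is easily absorbed into the error term, while the dominant part $\sum_{p < x} N_{k-1}(x/p)$ must be handled with care.

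The main obstacle is precisely this last sum: the factor $1/\ln(x/p)$ coming from the induction hypothesis blows up as $p$ approaches $x$, so one cannot simply pull it out. I would control it by splitting the range of summation at $p = \sqrt{x}$, where $\ln(x/p) \ge \tfrac{1}{2}\ln x$, and estimating the tail $\sqrt{x} \le p < x$ separately; more cleanly, one can invoke Abel summation (Lemma \ref{lemma:abel}) against the Mertens estimate $\sum_{p < x} 1/p = \ln\ln x + \mathcal{O}(1)$. The delicate point of the bookkeeping is that summing $1/p$ over primes introduces exactly \emph{one} further factor of $\ln\ln x$ while the crucial factor $1/\ln x$ is preserved, and this is what propagates the inductive bound from $k-1$ to $k$.

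A quicker, if less precise, alternative that already delivers $o(x)$ is Tur\'an's second-moment estimate $\sum_{n < x}\left(\omega(n) - \ln\ln x\right)^2 = \mathcal{O}(x\ln\ln x)$, itself a short consequence of the two Mertens-type sums $\sum_{p<x}1/p$ and $\sum_{p,q<x}1/(pq)$. For fixed $k$ and $x$ large one has $|\omega(n) - \ln\ln x| \ge \tfrac{1}{2}\ln\ln x$ for every $n$ counted by $N_k(x)$, so Chebyshev's inequality gives $N_k(x) = \mathcal{O}\!\left(x/\ln\ln x\right) = o(x)$, which suffices for the statement even though it is weaker than the Landau-type bound obtained by the induction above.
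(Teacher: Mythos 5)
Your main argument is the same route the paper takes: induction on $k$ toward the Landau-type bound $N_k(x)=\mathcal{O}\bigl(x(\ln\ln x)^{k-1}/\ln x\bigr)$, with the base case settled by counting prime powers and the inductive step by stripping off a prime-power factor and summing $N_{k-1}(x/p^a)$ over $p$ and $a$. The base case and the range $p\le\sqrt{x}$ of the inductive step are fine (Mertens' estimate $\sum_{p\le\sqrt{x}}1/p=\mathcal{O}(\ln\ln x)$ supplies exactly the one extra factor of $\ln\ln x$, as you say). The gap is precisely at the point you flag as ``the main obstacle'' and then leave as a sketch: the tail $\sqrt{x}\le p<x$. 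The obvious way to ``estimate the tail separately'' --- the trivial bound $N_{k-1}(y)\le y$ --- fails, because $\sum_{\sqrt{x}\le p<x}1/p=\ln 2+o(1)$ by Mertens, so the tail would contribute a quantity of order $x$ and destroy the estimate. To close it you must reuse the induction hypothesis inside the tail and verify something like $\sum_{\sqrt{x}\le p\le x/2}\frac{1}{p\ln(x/p)}=\mathcal{O}(\ln\ln x/\ln x)$ (Abel summation against $\pi(t)=\mathcal{O}(t/\ln t)$; the substitution $p=x^u$ turns this into $\frac{1}{\ln x}\int\frac{\mathrm{d}u}{u(1-u)}$, whose logarithmic divergence at $u=1$ costs only a harmless $\ln\ln x$), plus an $\mathcal{O}(\pi(x))$ term for those $p$ with $x/p$ below the threshold where the inductive estimate is valid. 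The paper avoids the tail altogether: it decomposes $n$ only with respect to prime powers $p^{a}$ satisfying $p^{a+1}\le x$ --- such a factor always exists, since the smallest prime-power factor of $n$ is at most $x^{1/k}$ --- which forces $\ln(x/p^{a})\ge\frac{\ln x}{a+1}$ throughout and lets the elementary inequality $\frac{1}{a-b}\le\frac{1}{a}+\frac{2b}{a^2}$ plus Mertens finish the job. Either fix works, but as written your inductive step has a hole at the top of the range of $p$.

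Your Tur\'an second-moment alternative is a genuinely different and valid route: for fixed $k$ it gives $N_k(x)=\mathcal{O}(x/\ln\ln x)=o(x)$ with far less machinery, which does prove the lemma \emph{as stated}. Be aware, though, that the paper's proof of this lemma is really a vehicle for the uniform refinement \eqref{eq:estimate1}, with the factor $1/(k-1)!$ and with $k$ allowed to grow like $\ln\ln x$; that is what the proof of Theorem \ref{th:main1} actually consumes. The Tur\'an bound cannot be pushed that far, so taking that shortcut here would leave the quantitative estimate still owed later.
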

\begin{proof}
For each positive integer $k$, let $\varrho_k(x)$ be the number of positive integers $n$ smaller than or equal to $x$ with exactly $k$ distinct prime factors. In terms of Bachmann-Landau notations, the statement is equivalent to $\varrho_k(x)=o(x)$ and it would be sufficient to prove that 
\begin{equation}\label{eq:claim1}
\varrho_k(x)=\mathcal{O}\left(\frac{x}{\ln x} \, (\ln \ln x)^{k-1}\right).
\end{equation}
Let us show this claim by induction, starting from the case $k=1$. For each prime $p$ smaller than or equal to $x$, the number of powers of $p$ in $[1,x]$ is exactly $\lfloor \text{log}_p(x)\rfloor$. Then, summing over all primes we obtain
\begin{displaymath}
\varrho_1(x)= \sum_{p\le x}\left\lfloor \frac{\ln x}{\ln p}\right\rfloor =\mathcal{O}\left(\ln x\sum_{p\le x} \frac{1}{\ln p}\right).
\end{displaymath} 
Setting $f(t)=\frac{1}{\ln t}$ in Lemma \ref{lemma:abel}, we evaluate the sum $\sum_{p\le x}{\frac{1}{\ln p}}$ so that
\begin{displaymath}
\varrho_1(x)=\mathcal{O}\left(\pi(x)+\ln x\int_2^x{\frac{\pi(t)}{t\ln^2 t}\, \text{d}t}\right),
\end{displaymath}
where $\pi(t)$ stands for the number of primes $\le t$. Considering that $\pi(t)=\mathcal{O}\left(\frac{t}{\ln t}\right)$ by the Prime Number Theorem, we have that the argument of the integral is equal to $\mathcal{O}\left(\frac{1}{\ln^3 t}\right)= \mathcal{O}\left(\frac{1}{\ln^2 t}\right)=\mathcal{O}\left(\frac{\mathrm{d}}{\mathrm{d} t}\frac{t}{\ln^2 t}\right)$. Therefore
\begin{displaymath}
\varrho_1(x)=\mathcal{O}(\pi(x))+\mathcal{O}\left(\ln x\int_2^x{\frac{\mathrm{d}}{\mathrm{d} t}\frac{t}{\ln^2 t}\, \text{d}t}\right)=\mathcal{O}(\pi(x)).
\end{displaymath}

At this point, suppose that the claim \eqref{eq:claim1} holds for a positive integer $k$, and let us prove that $\varrho_{k+1}(x)=\mathcal{O}\left(\frac{x}{\ln x}(\ln \ln x)^{k}\right)$. For each positive integer $\alpha_0$ smaller than $\ln x$, consider the numbers of the form $p_0^{\alpha_0} \cdot (p_1^{\alpha_1}\cdots p_k^{\alpha_k})$ not greater then $x$, such that $p_1<\cdots<p_k$, and $p_0^{\alpha_0+1}\le x$. Notice that each number $n\le x$ with $\omega(n)=k+1$ can be expressed in such form at least $k+1$ times, depending on the position of the first factor. It follows that 
\begin{displaymath}
(k+1)\varrho_{k+1}(x) \le \sum_{n \in \mathbf{N}\cap [1,\ln x]}\sum_{p^{n+1}\le x}{\varrho_k\left(\frac{x}{p^{n}}\right)},
\end{displaymath}
and in particular
\begin{displaymath}
\varrho_{k+1}(x)=\mathcal{O}\left(\sum_{n \in \mathbf{N}}\sum_{p^{n+1}\le x}{\varrho_k\left(\frac{x}{p^{n}}\right)}\right).
\end{displaymath}
Since we assumed that $\varrho_k(x)=\mathcal{O}\left(\frac{x}{\ln x}(\ln \ln x)^{k-1}\right)$, we obtain that
\begin{displaymath}
\varrho_{k+1}(x)=\mathcal{O}\left(x(\ln \ln x)^{k-1}\sum_{n \in \mathbf{N}}\sum_{p\le x^{1/(n+1)}}{\frac{1}{p^n \ln (x/p^n)}}\right).
\end{displaymath}
Now, the key observation is that the main contribution of the summation comes from the index $n=1$. For all positive reals $a,b$ such that $a\ge 2b$ we have $\frac{1}{a-b}\le \frac{1}{a}+\frac{2b}{a^2}$, therefore
\begin{displaymath}
\sum_{p^2\le x}{\frac{1}{p(\ln x-\ln p)}}=\mathcal{O}\left(\frac{1}{\ln x}\sum_{p\le \sqrt{x}}{\frac{1}{p}}\right)+\mathcal{O}\left(\frac{1}{\ln^2 x}\sum_{p\le \sqrt{x}}{\frac{\ln p}{p}}\right).
\end{displaymath}


On the one hand, setting $f(x)=\frac{1}{x}$ in Lemma \ref{lemma:abel}, we obtain 
\begin{displaymath}
\sum_{p\le \sqrt{x}}{\frac{1}{p}}\le \frac{\pi(x)}{x}+\int_2^x{\frac{\pi(t)}{t^2}\, \text{d}t}=\mathcal{O}\left(\frac{1}{\ln x}\right)+\mathcal{O}\left(\int_2^x{\frac{\mathrm{d}}{\mathrm{d} t}\ln \ln t\text{ d}t}\right)=\mathcal{O}(\ln \ln x).
\end{displaymath}
On the other hand, with the same argument, setting $f(x)=\frac{\ln x}{x}$ we get
\begin{displaymath}
\sum_{p\le \sqrt{x}}{\frac{\ln p}{p}}\le \frac{\pi(x)}{x}+\int_2^x{\frac{\ln t-1}{t\ln t}\text{d}t}=\mathcal{O}\left(\ln x\right).
\end{displaymath}
Notice that last two upper bounds follow from the first and second Mertens' theorems (see, for example, \cite{Apostol}). Putting these results together, we conclude that
\begin{displaymath}
\varrho_{k+1}(x)=\mathcal{O}\left(\frac{x}{\ln x}\, (\ln \ln x)^{k}\right)+\mathcal{O}\left(x(\ln \ln x)^{k-1}\sum_{n \ge 2}\sum_{p^{n+1}\le x}{\frac{1}{p^n \ln (x/p^n)}}\right).
\end{displaymath}

Since that condition $p^{n+1}\le x$ is equivalent to $\ln (x/p^n) \ge \frac{\ln x}{n+1}$, we have also
\begin{displaymath}
\begin{split}
\sum_{n \ge 2}\sum_{p^{n+1}\le x}{\frac{1}{p^n \ln (x/p^n)}}&=\mathcal{O}\left(\frac{1}{\ln x}\sum_{n \ge 2}\sum_{p^{n+1}\le x}{\frac{n}{p^n}}\right)=\mathcal{O}\left(\frac{1}{\ln x}\sum_{n \ge 2}\, \sum_{m\ge 2}{\, \frac{n}{m^n}}\right) \\
 &=\mathcal{O}\left(\frac{1}{\ln x}\sum_{n \ge 2}\, \frac{n^2}{2^n}\right)=\mathcal{O}\left(\frac{1}{\ln x}\right).
\end{split}
\end{displaymath}
This is enough to conclude that \eqref{eq:claim1} holds also for $k+1$, completing the proof.
\end{proof}    

Observe that the existence of a constant $c_k$ such that $\varrho_{k+1}(x)\le c_k\, \frac{x}{\ln x} (\ln \ln x)^k$ assumes that $k$ is a fixed positive integer, not depending on $x$. As far as we are interested in the case $c_k=c_k(x)$, we can repeat the above proof to obtain  
\begin{equation}\label{eq:estimate1}
\varrho_{k}(x)=\mathcal{O}\left(\frac{x}{\ln x} \cdot \frac{(\ln \ln x)^{k-1}}{(k-1)!}\right).
\end{equation}
It implies that, choosing $k$ as function of $x$, an upper bound of $\varrho_k(x)$ needs an estimate of the order of magnitude of the factorial $(k-1)!$. 

\begin{lemma}\label{lm:stirling} \textbf{(Stirling formula).} There exists a constant $c$ such that
\begin{displaymath}
\ln n!=n\ln n-n+\ln \sqrt{n}+c+\mathcal{O}(1/n).
\end{displaymath}
\end{lemma}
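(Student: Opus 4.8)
The plan is to apply Abel summation (Lemma \ref{lemma:abel}) to the sum $\ln n!=\sum_{k\le n}\ln k$ and then to extract, from the contribution of the fractional part of the summation variable, both the logarithmic term $\ln\sqrt n$ and the constant $c$. Concretely, I would set $\lambda_k=k$, $a_k=1$, and $f(t)=\ln t$, so that $\alpha(x)=\sum_{k\le x}1=\lfloor x\rfloor$. Evaluating Lemma \ref{lemma:abel} at the integer $x=n$ gives
\begin{displaymath}
\ln n!=n\ln n-\int_1^n{\frac{\lfloor t\rfloor}{t}\,\text{d}t}.
\end{displaymath}
Writing $\lfloor t\rfloor=t-\{t\}$, where $\{t\}$ denotes the fractional part, splits the integral into the elementary part $\int_1^n{\text{d}t}=n-1$ and the remainder $\int_1^n{\frac{\{t\}}{t}\,\text{d}t}$, whence
\begin{displaymath}
\ln n!=n\ln n-n+1+\int_1^n{\frac{\{t\}}{t}\,\text{d}t}.
\end{displaymath}

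This last integral does not converge as $n\to\infty$, since $\{t\}$ has mean $\tfrac12$; the next step is therefore to write $\{t\}=\tfrac12+(\{t\}-\tfrac12)$, which produces the term $\tfrac12\int_1^n{\frac{\text{d}t}{t}}=\ln\sqrt n$ together with the corrected integral $\int_1^n{\frac{\{t\}-1/2}{t}\,\text{d}t}$. It then remains to show that this corrected integral equals $c^\prime+\mathcal{O}(1/n)$ for some constant $c^\prime$; setting $c=1+c^\prime$ yields the statement.

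The convergence-and-tail estimate is the only delicate point, and it is where I expect the main work to lie. I would handle it by letting $\Psi$ be a bounded antiderivative of $\{t\}-\tfrac12$, which is bounded precisely because $\{t\}-\tfrac12$ has zero mean over each unit interval, and then integrating by parts:
\begin{displaymath}
\int_n^M{\frac{\{t\}-1/2}{t}\,\text{d}t}=\frac{\Psi(M)}{M}-\frac{\Psi(n)}{n}+\int_n^M{\frac{\Psi(t)}{t^2}\,\text{d}t}.
\end{displaymath}
Since $\Psi$ is bounded, letting $M\to\infty$ shows that the improper integral converges, and the trivial estimate $\int_n^\infty{t^{-2}\,\text{d}t}=1/n$ bounds the tail by $\mathcal{O}(1/n)$. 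Taking $c^\prime=\int_1^\infty{\frac{\{t\}-1/2}{t}\,\text{d}t}$ then gives $\int_1^n{\frac{\{t\}-1/2}{t}\,\text{d}t}=c^\prime+\mathcal{O}(1/n)$, which completes the argument; everything apart from this estimate is a direct manipulation of the Abel summation identity.
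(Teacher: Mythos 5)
Your proof is correct and follows essentially the same route as the paper: Abel summation with $f(t)=\ln t$, extraction of the fractional part, and an integration by parts against a bounded antiderivative to get convergence plus an $\mathcal{O}(1/n)$ tail. The only difference is cosmetic — you subtract the mean $\tfrac12$ before integrating by parts, while the paper integrates by parts first using $g(x)=\tfrac12\left(\lfloor x\rfloor+\mathrm{frac}^2(x)\right)$ and then splits off the $t/2$ term; indeed $g(t)-t/2=\tfrac12\left(\mathrm{frac}^2(t)-\mathrm{frac}(t)\right)$ is exactly an admissible choice of your $\Psi$.
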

\begin{proof}
Defining $\text{frac}(t)$ the fraction part of $t$, that is $t-\lfloor t\rfloor$, the value $\ln n!$, according to Lemma \ref{lemma:abel}, is equal to
\begin{displaymath}
\sum_{m\le n}{\ln m} =n\ln n-\int_1^n{\frac{\lfloor t\rfloor}{t}\text{d}t}  =n\ln n-(n-1)+\int_1^n{\frac{\text{frac}(t)}{t}\text{d}t}.
\end{displaymath}
Let $g$ be the map defined by $x \mapsto \frac{1}{2}\left(\lfloor x\rfloor +\text{frac}^2(x)\right)$. Then $g$ is continuous and derivable in each non-integer point $x$. It implies that $g^\prime(x)$ is exactly $\text{frac}(x)$, and integrating by parts we obtain
\begin{displaymath}
\int_1^n{\frac{\text{frac}(t)}{t}\text{d}t} =\left[\frac{g(t)}{t}\right]_1^n+\int_1^n{\frac{g(t)}{t^2}\text{d}t} = \int_1^n{\frac{t}{2t^2}\text{d}t}+\int_1^n{\frac{\text{frac}^2(t)-\text{frac}(t)}{2t^2}\text{d}t}.
\end{displaymath}
Moreover, the last integral is convergent to some costant $c$, indeed
\begin{displaymath}
\lim_{n\to \infty}\int_1^n{\left|\frac{\text{frac}^2(t)-\text{frac}(t)}{t^2}\right|\text{d}t}=\mathcal{O}\left( \sum_{m\le n}{\frac{1}{m^2}}\right)=\mathcal{O}(1).
\end{displaymath}
At this point, the proof is complete, noticing that
\begin{displaymath}
\int_1^n{\frac{\text{frac}^2(t)-\text{frac}(t)}{t^2}\, \text{d}t}=c+\int_n^{\infty}{\frac{\text{frac}^2(t)-\text{frac}(t)}{t^2}\, \text{d}t}=c+\mathcal{O}(1/n).
\end{displaymath}
\end{proof}

\noindent Everything is finally ready to prove Theorem \ref{th:main1}. 

\section{Proof of the Theorem \ref{th:main1}}\label{sec:proof}

\begin{proof}
Let $n=p_1^{\alpha_1}\cdots p_k^{\alpha_k}$ be an integer greater than $1$, then $(p_1-1)\cdots (p_k-1)$ divides $\varphi(n)$ according to Lemma \ref{pie}. It implies that, in the worst case, $\varphi(n)$ is divisible by $2^{k-1}$. 

Let $N_k$ be the set of positive integers with at most $k$ distinct prime factors, and $M_k$ its complementary set, so that if $n$ belongs to $M_k$ then $2^k$ divides $\varphi(n)$. Considering that if $X,Y,Z$ are sets such that $X=Y\cup Z$ then $|X|\le |Y|+|Z|$, we get
\begin{displaymath}
\begin{split}
V(x) & \le |\varphi(N_k)\cap [1,x]|+|\varphi(M_k)\cap [1,x]|  		\\
                                 & \le |N_k\cap [1,x]|+|\varphi(M_k)\cap [1,x]| \\
                                & \le \varrho_1(x)+\ldots+\varrho_k(x)+2^{-k}x.
\end{split}
\end{displaymath}

According to Lemma \ref{lemma:omegabound} and the estimate \eqref{eq:estimate1}, the inequality $\varrho_{k}(x) \le \varrho_{k+1}(x)$ holds whenever $k$ is smaller than $\ln \ln x$. Define then $k=k(x)=\lceil c\ln \ln x\rceil$ for some positive real $c$ smaller than $1$, and let us try to minimize the right hand side of the above inequality. Clearly, we would have
\begin{displaymath}
V(x) \le k\varrho_k(x)+2^{-k}x.
\end{displaymath} 

On the one hand, we have that $2^{-k}x$ has order of magnitude $\frac{x}{(\ln x)^{c\ln 2}}$, implying that the greater $c$ is, the smaller its value will be. On the other hand, we have also
\begin{displaymath}
k\varrho_k(x)=\mathcal{O}\left( k\varrho_{k+1}(x)\right)=\mathcal{O}\left(\frac{x}{\ln x} \cdot \frac{(\ln \ln x)^{k}}{(k-1)!}\right).
\end{displaymath}
Taking the logarithm at each side we get
\begin{displaymath}
\ln(k\varrho_k(x))=\ln x-\ln\ln x +k\ln\ln\ln x-\ln(k-1)!+\mathcal{O}(1).
\end{displaymath}
Since we are taking care here only in addends on order of magnitude at least $\ln \ln x$, the term $\ln(k-1)!$ can be substituted with $\ln k!$ as far as their difference is just $\mathcal{O}(\ln\ln\ln x)$. According the approximation provided in Lemma \ref{lm:stirling} we obtain
\begin{displaymath}
\begin{split}
\ln(k\varrho_k(x))&=\ln x-\ln\ln x+k\ln\ln\ln x-k\ln k+k+\mathcal{O}\left(\ln\ln\ln x\right) \\
                  &=\ln x-\ln\ln x(1+c-c\ln c)+\mathcal{O}\left(\ln\ln\ln x\right).
\end{split}
\end{displaymath}

This is enough to conclude that for all $c$ in $(0,1)$ we have
\begin{displaymath}
V(x)=\mathcal{O}\left(\frac{x}{(\ln x)^{1-c+c\ln c}}\right)+\mathcal{O}\left(\frac{x}{(\ln x)^{c\ln 2}}\right)=\mathcal{O}\left(\frac{x}{(\ln x)^{\min\{1-c+c\ln c,c\ln 2\}}}\right).
\end{displaymath}
Since the function $c\mapsto (1-c+c\ln c)$ is strictly decreasing in the interval $(0,1)$, the smaller $c$ is, the smaller $\frac{x}{(\ln x)^{1-c+c\ln c}}$ will be. It implies that the best upper bound estimate is obtained in the case that $1-c+c\ln c=c\ln 2$, that is when $c$ is equal to $c^\star \approx .3733646177$. 

We can finally conclude that
\begin{displaymath}
V(x)=\mathcal{O}\left(\frac{x}{(\ln x)^{c^\star \ln 2}}\right)=\mathcal{O}\left(\frac{x}{(\ln x)^{.2587966}}\right),
\end{displaymath}
which is the optimal result in line with the original Pillai's estimate.
\end{proof}

\section{Acknowledgements}
I am grateful to Salvatore Tringali (Texas A\&M University, Qatar) for having attracted my attention to this interesting problem.


\begin{thebibliography}{99}
%
\bibitem{Apostol} T. Apostol, \emph{Introduction to Analytic Number Theory}, Springer-Verlag, $1976$. 
%
\bibitem{Erdosphi} P. Erd\"{o}s, \emph{On the normal number of prime factors of $p-1$ and some related problems concerning Euler's $\varphi$-function}, Quart. J. Math. Oxford Ser., Vol.$6$, $1935$.  
%
\bibitem{Ford} K. Ford, \emph{The distribution of totients}, Ramanujan J, Vol. $2$, $1998$.
%
\bibitem{Pillai}  S.S. Pillai, \emph{On some functions connected with} $\varphi(n)$, Bull. Amer. Math. Soc., Vol.$35$, $1929$. 
%
%
\bibitem{Maier}  H. Maier and C. Pomerance, \emph{On the number of distinct values of Euler’s} $\phi$\emph{-function}, Acta Arith., Vol.$49$, $1988$.
%
%
\end{thebibliography}
\end{document}